\crefname{section}{Section}{Sections}
\crefname{subsection}{\S}{\S\S}
\crefname{subsubsection}{\S}{\S\S}
\theoremstyle{plain}
\newtheorem{lemma}{Lemma}[section]
\newtheorem{proposition}[lemma]{Proposition}
\newtheorem{theorem}[lemma]{Theorem}
\theoremstyle{nonumberplain}
\theoremstyle{plain}
\newtheorem{definition}[lemma]{Definition}
\newtheorem{example}[lemma]{Example}
\newtheorem{remark}[lemma]{Remark}
\crefname{definition}{definition}{definitions}
\crefname{ex}{example}{examples}
\crefname{remark}{remark}{remarks}
\crefname{convention}{convention}{conventions}
\crefname{notation}{notation}{notations}
\crefname{table}{table}{tables}
\crefname{lemma}{lemma}{lemmas}
\crefname{proposition}{proposition}{propositions}
\crefname{corollary}{corollary}{corollaries}
\crefname{theorem}{theorem}{theorems}
\crefname{enumi}{}{}
\crefname{assumption}{assumption}{Assumptions}
\crefname{equation}{}{}
\numberwithin{equation}{section}
\theoremstyle{nonumberplain}
\newtheorem{proof}{Proof}
\newcommand\bC{{\mathbb C}}
\newcommand\bZ{{\mathbb Z}}
\DeclareMathOperator{\id}{id}
\newcommand{\co}[1]{C_0\left({#1}\right)}
\newcommand{\couh}[1]{C_0^u\left(\widehat{{#1}}\right)}
\title{Fields of locally compact quantum groups: continuity and pushouts}
\author{Alexandru Chirvasitu}
\begin{document}

\date{}

\newcommand{\Addresses}{{% additional braces for segregating \footnotesize
  \bigskip
  \footnotesize

  \textsc{Department of Mathematics, University at Buffalo, Buffalo,
    NY 14260-2900, USA}\par\nopagebreak \textit{E-mail address}:
  \texttt{achirvas@buffalo.edu}

% %   \medskip
% %   
% %   \textsc{Department of Mathematics, institution,
% %     address}\par\nopagebreak \textit{E-mail address}:
% %   \texttt{??}
% % 
}}

\maketitle

\begin{abstract}
  We prove that (a) discrete compact quantum groups (or more generally locally compact, under additional hypotheses) with coamenable dual are continuous fields over their central closed quantum subgroups, and (b) the same holds for free products of discrete quantum groups with coamenable dual amalgamated over a common central subgroup. Along the way we also show that free products of continuous fields of $C^*$-algebras are again free via a Fell-topology characterization for $C^*$-field continuity, recovering a result of Blanchard's in a somewhat more general setting.
\end{abstract}

\noindent {\em Key words: $C^*$-algebra; continuous field; weak containment; Fell topology; locally compact quantum group; discrete quantum group; pushout; free product with amalgamation}

\vspace{.5cm}

\noindent{MSC 2020: 46L09; 20G42; 18A30; 46L65}

%\tableofcontents

%%%%%%%%%%%%%%%%%%%%%%%%%%%%%%%%%%%%%%%%%%%%%%%%%%%%%%%%%%%%%%%%%%%%%%%%%%%%%%%%%%%%%%%%%%%%%%%%%%%%%%%%%%%%%%%%%%
%%%%%%%%%%%%%%%%%%%%%%%%%%%%%%%%%%%%%%%%%%%%%%%%%%%%%%%%%%%%%%%%%%%%%%%%%%%%%%%%%%%%%%%%%%%%%%%%%%%%%%%%%%%%%%%%%%
\section*{Introduction}

The initial motivation for the present note was the desire to extend one of the main results of \cite{chi-rf} (Theorem 3.2 therein) in two ways: from plain (``classical'') to {\it quantum} groups, and from discrete to locally compact. The result appears below as \Cref{th:1gp}:

\begin{theorem}\label{th:1gp-intro}
  For
  \begin{itemize}
  \item a locally compact quantum group $G$ with coamenable dual
  \item with a central closed quantum subgroup $H\le G$
  \item such that $G/H$ has coamenable dual (automatic if $G$ is discrete)
  \end{itemize}
  the group $C^*$-algebra $C_0^u(\widehat{G})$ forms a continuous field over the group algebra $C_0^u(\widehat{H})$ of any central closed quantum subgroup $H\le G$.
\end{theorem}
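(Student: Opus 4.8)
The plan is to reduce field continuity to a weak‑containment statement between the fibres, settle that statement by passing to the reduced (regular‑representation) picture that coamenability makes available, and then extract continuity from the resulting continuous field of Hilbert spaces.

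\emph{Reduction.} Since $H$ is central, $C_0^u(\widehat H)$ is commutative (as holds for any central closed quantum subgroup); write $C_0^u(\widehat H)=C_0(X)$ with $X$ its spectrum, and note that centrality provides a nondegenerate embedding of $C_0(X)$ into the centre of $M\bigl(C_0^u(\widehat G)\bigr)$, making $A:=C_0^u(\widehat G)$ a $C_0(X)$-algebra. Dually, $\widehat{G/H}$ is a normal closed quantum subgroup of $\widehat G$ fitting in an extension $1\to\widehat{G/H}\to\widehat G\to\widehat H\to 1$, and this exhibits the fibre $A_\chi:=A/I_\chi A$ (with $I_\chi=\ker\mathrm{ev}_\chi\subseteq C_0(X)$) as a $\chi$-twisted group $C^*$-algebra of the quotient $G/H$; for the trivial character one recovers $A_\chi=C_0^u(\widehat{G/H})$. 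Upper semicontinuity of $\chi\mapsto\|a_\chi\|$ is automatic for a $C_0(X)$-algebra, so only the reverse inequality is at stake; equivalently, by the Fell‑topology characterization of $C^*$-field continuity proved earlier in the paper, it suffices to show that for each $\chi_0\in X$ the quotient $A\twoheadrightarrow A_{\chi_0}$ is weakly contained in $\{A\twoheadrightarrow A_\chi\}_{\chi\in U}$ for every neighbourhood $U\ni\chi_0$.

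\emph{The reduced picture.} Coamenability of $\widehat G$ gives $A=C_0^u(\widehat G)=C_0^r(\widehat G)$, acting faithfully on $L^2(G)$, under which the central $C_0(X)$ acts diagonally for the decomposition of $L^2(G)$ along the central copy of $H$; restricting to the $\chi$-homogeneous subspace $\cH_\chi$ — a copy of $L^2(G/H)$ carrying the $\chi$-twist — yields a representation $A\to B(\cH_\chi)$ killing $I_\chi A$, hence a quotient $A_\chi\twoheadrightarrow A_\chi^{\mathrm r}:=C^*\{a_\chi:a\in A\}$. The crux is that this quotient is an isomorphism: $A_\chi$ is a $\chi$-twisted group algebra of $G/H$, whose dual $\widehat{G/H}$ is coamenable by hypothesis, and the standard averaging against the counit of $\widehat{G/H}$, performed inside each $H$-homogeneous component, collapses the universal twisted completion onto the reduced one, so $A_\chi=A_\chi^{\mathrm r}$ acts faithfully on $\cH_\chi$. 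Via the extension structure the spaces $\cH_\chi$ assemble into a continuous field of Hilbert spaces over $X$, on which the twisted regular representations form a continuous field of representations.

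\emph{Continuity and conclusion.} On a dense $*$-subalgebra $\cA\subseteq A$ — the Hopf $*$-algebra $\mathrm{Pol}(\widehat G)$ when $G$ is discrete, a suitable dense subalgebra built from $L^1(G)$ in general — elements are expressible with coefficients homogeneous for the central $H$-grading, whose fibre components twist by characters depending continuously on $\chi$; hence for $a\in\cA$ the section $\chi\mapsto a_\chi\in B(\cH_\chi)$ is continuous for the continuous field $\{\cH_\chi\}$, and lower semicontinuity of the operator norm makes $\chi\mapsto\|a_\chi\|_{B(\cH_\chi)}=\|a\|_{A_\chi}$ lower semicontinuous — hence, together with automatic upper semicontinuity, continuous on $\cA$. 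Because the fibre seminorms are uniformly dominated by $\|\cdot\|_A$, uniform approximation extends continuity of $\chi\mapsto\|a\|_{A_\chi}$ from $\cA$ to all of $A$, which is exactly the statement that $C_0^u(\widehat G)$ is a continuous field over $C_0^u(\widehat H)$ (and in particular verifies the weak‑containment condition above).

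\emph{Main obstacle.} The load‑bearing step is the identification $A_\chi=A_\chi^{\mathrm r}$: one must pin down the ideal $I_\chi A$ precisely from the central extension $1\to\widehat{G/H}\to\widehat G\to\widehat H\to 1$, recognise the quotient as a twisted group $C^*$-algebra of $G/H$, and check that coamenability of $\widehat{G/H}$ genuinely forces its universal twisted completion down to the reduced one — this is where both coamenability hypotheses are consumed. It is also the step that resists a purely formal treatment away from the discrete case: for discrete $G$ the coamenability of $G/H$ is automatic and $\mathrm{Pol}(\widehat G)$ is ready to hand, whereas in general one must separately justify the existence of a workable dense subalgebra $\cA$ and the behaviour of the decomposition of $L^2(G)$ over the (now possibly non‑compact) base $X$.
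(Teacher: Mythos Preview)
Your approach is quite different from the paper's and, while the strategy is plausible for discrete $G$, the load-bearing steps are asserted rather than proved. The identification of the fibre $A_\chi$ with a $\chi$-twisted group $C^*$-algebra of $G/H$, standard for classical central extensions, needs to be set up in the locally compact quantum setting; more seriously, the claim that coamenability of $\widehat{G/H}$ collapses the \emph{universal} $\chi$-twisted completion onto the reduced one is precisely where the difficulty lies, and ``standard averaging against the counit, performed inside each $H$-homogeneous component'' is not a proof---even classically this is a theorem rather than a formality, and its quantum analogue needs an argument you have not given. You yourself acknowledge that beyond the discrete case the dense subalgebra $\cA$ and the behaviour of the $L^2(G)$-decomposition over a possibly non-compact $X$ ``resist a purely formal treatment,'' so as written this is a sketch rather than a proof.

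By contrast, the paper's argument is a short Fell-topology trick that never identifies the fibres. Given a net $x_\alpha\to x$ in $\widehat{H}$ and a representation $\rho$ of $G$ lying over $p_x$, coamenability of $\widehat{G/H}$ yields ${\bf 1}_G\preceq\mathrm{Ind}_H^G({\bf 1}_H)$, hence $\rho\cong\rho\otimes{\bf 1}_G\preceq\rho\otimes\mathrm{Ind}_H^G({\bf 1}_H)$, and the latter is the Fell limit of $\rho\otimes\mathrm{Ind}_H^G(p_{x_\alpha x^{-1}})$, on which $H$ acts by $p_{x_\alpha}$. This tensoring-with-induced-characters device uses only that the trivial representation of $G/H$ is weakly contained in its regular representation, together with Fell-continuity of induction and tensoring, and works uniformly in the locally compact case without any twisted-algebra machinery or fibre identification.
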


We recall the terminology and notation below in \Cref{subse:lcqg} (e.g. $\widehat{G}$ for the {\it dual} $\widehat{G}$ of $G$), pausing here only to remind the reader that a {\it reduced locally compact quantum group} $G$ in the sense of \cite{kv-lcqg} (see also \cite[Definition 8.1.17]{Ti08}) consists of a generally non-unital $C^*$-algebra $C^r_0(G)$ (thought of as a algebra of continuous functions vanishing at infinity on $G$) equipped with
\begin{itemize}
\item a coassociative {\it comultiplication} morphism $\Delta:C^r_0(G)\to C_0^r(G)^{\otimes 2}$ (minimal $C^*$ tensor product) in the sense of \Cref{def:mor} (i.e. landing in the multiplier algebra);
\item left and right-invariant {\it Haar weights} (on which we do not elaborate);
\item and hence a von Neumann algebra $L^{\infty}(G)$ ( \cite[Definition 8.1.4]{Ti08} or \cite{kv-vn}) attached to one of these invariant weights via the GNS construction.
\end{itemize}
 
Since furthermore \cite[Theorem 3.2]{chi-rf} handles {\it pushouts} of amenable groups $G_i$, $i=1,2$ over a common central subgroup $H$, it seemed desirable to have an analogous extension here (\Cref{th:main}):

\begin{theorem}\label{th:main-intro}
    Let $G_i$, $i\in I$ be a family of discrete quantum groups with coamenable duals and a common central closed quantum subgroup $H\le G_i$. Then, the $C^*$ pushout
  \begin{equation*}
    \Asterisk_{C^u\left(\widehat{H}\right)} C^u\left(\widehat{G_i}\right)
  \end{equation*}
  is a continuous field over the commutative $C^*$-algebra $C^u\left(\widehat{H}\right)$.
\end{theorem}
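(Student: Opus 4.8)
The strategy is to reduce the amalgamated-free-product statement to the single-group statement \Cref{th:1gp-intro} (applied to each $G_i$) together with the Fell-topological characterization of $C^*$-field continuity that, according to the abstract, is established earlier in the paper and shows that free products of continuous fields are continuous fields. Concretely: by \Cref{th:1gp-intro}, each $C^u(\widehat{G_i})$ is a continuous field over $A\coloneqq C^u(\widehat{H})$; here $A$ is commutative since $H$ is central (its dual is a classical abelian group), so $\mathrm{Spec}(A)=\widehat{H}$ — call it $X$ — is the base space over which all the fields live. Each structure map $A\to C^u(\widehat{G_i})$ is the one dualizing the central inclusion $H\le G_i$, hence is the canonical map making $C^u(\widehat{G_i})$ an $A$-algebra, and it is injective (coamenability of $\widehat H$ gives $C^u(\widehat H)=C^*_r(\widehat H)$ faithfully sitting inside $C^u(\widehat{G_i})$). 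The pushout $\Asterisk_A C^u(\widehat{G_i})$ is then the free product of these $A$-algebras amalgamated over $A$, and the content of the "free products of continuous fields are again free" result is exactly that this pushout is again a continuous field over $X$.

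In more detail, the key steps are as follows. First I would record that $A$ is commutative with spectrum $X=\widehat H$ and that, for each $x\in X$, the fiber of $C^u(\widehat{G_i})$ at $x$ is the quotient $C^u(\widehat{G_i})/\overline{(\ker x)\cdot C^u(\widehat{G_i})}$; continuity of the $i$-th field means the field of these fibers is continuous on $X$, equivalently (by the Fell-topology characterization proved earlier) that the relevant map from $X$ to the space of ideals, or the weak-containment/upper-and-lower-semicontinuity conditions, hold. Second, I would invoke the general principle — the Fell-topological reformulation plus the compatibility of amalgamated free products with taking fibers — to identify the fiber of the pushout at $x\in X$ with the amalgamated free product of the fibers:
\begin{equation*}
  \left(\Asterisk_{A}C^u(\widehat{G_i})\right)\big/\overline{(\ker x)\cdot\left(\Asterisk_{A}C^u(\widehat{G_i})\right)}\;\cong\;\Asterisk_{\bC}\left(C^u(\widehat{G_i})\big/\overline{(\ker x)\cdot C^u(\widehat{G_i})}\right),
\end{equation*}
the amalgamation over $A$ collapsing to amalgamation over $A/\ker x=\bC$. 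Third, with the fibers identified, continuity of the pushout field over $X$ follows from the fact that a free product of pointwise-varying $C^*$-algebras depends continuously on its factors in the Fell topology — this is the "free products of continuous fields are free" theorem recovering Blanchard's result, applied to the $|I|$-tuple of continuous fields $\big(C^u(\widehat{G_i})\big)_{i\in I}$ over the common base $X$.

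The main obstacle I expect is the second step: showing that forming the amalgamated free product over $A$ commutes with forming fibers, i.e. that reducing mod $\ker x$ turns the $A$-amalgamated free product into the $\bC$-amalgamated (ordinary reduced/full, as appropriate) free product of the reduced algebras. This requires care about which free product (universal/full or reduced) is meant in the pushout $\Asterisk_A$, about whether the canonical surjection $\Asterisk_A C^u(\widehat{G_i})\to \Asterisk_\bC(\cdots)$ is an isomorphism rather than merely a quotient, and about exactness-type issues (the ideal generated by $\ker x$ in the free product must be exactly the "diagonal" ideal coming from the factors). Coamenability of the duals should be what rescues exactness here — it forces full and reduced group $C^*$-algebras to coincide at every relevant stage, so no discrepancy between the two free-product constructions arises — and the discreteness of the $G_i$ guarantees the $G_i/H$ hypothesis of \Cref{th:1gp-intro} is automatic. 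Once this fiberwise identification and the Fell-continuity of free products are in hand, the theorem follows by assembling the per-factor continuity from \Cref{th:1gp-intro}.
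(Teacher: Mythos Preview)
Your plan is correct and matches the paper's approach: apply \Cref{th:1gp-intro} to each $G_i$ individually, then invoke the ``pushouts of continuous fields are continuous'' result (the paper's \Cref{th:psh-cont-intro}, recovering Blanchard) to conclude. The paper additionally handles infinite $I$ by reducing to pairs via induction and a filtered-colimit argument, but this is routine.

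Your anticipated ``main obstacle'' is a red herring, though: the paper's proof of \Cref{th:psh-cont-intro} works directly at the level of representations and the Fell topology (approximating the restrictions $\rho|_{A_i}$ by representations over nearby fibers and reassembling them into a representation of the pushout), so it never needs the fiber identification $\left(\Asterisk_A B_i\right)_x \cong \Asterisk_\bC (B_i)_x$ as a separate lemma, nor any exactness or full-versus-reduced considerations. Coamenability is used only to feed into \Cref{th:1gp-intro}, not to rescue anything about the pushout step.
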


Note that as opposed to \Cref{th:1gp-intro}, where $G$ is locally compact, here the $G_i$ are {\it discrete} (equivalently, $C_0^u(G)$ are unital, hence the missing `$0$' superscript in $C^u_0$): this is to avoid the unpleasantness of working with non-unital pushouts.

One natural path to \Cref{th:main-intro} (or something like it, perhaps covering pushouts of only {\it two} quantum groups) would be to start with \Cref{th:1gp-intro} and apply \cite[Theorem 3.7]{bl-fr}, to the effect that a pushout $A*_CB$ of fields of $C^*$-algebras continuous over a central $C^*$-algebra $C$ is again continuous over $C$. This was the initial intention, but in the process of unwinding that cited result the proof appeared to contain a gap. For that reason, it seemed worthwhile to try to recover \cite[Theorem 3.7]{bl-fr} here via a different approach (\Cref{th:psh-cont}):

\begin{theorem}\label{th:psh-cont-intro}
  Let $X$ be a compact Hausdorff space and $A_i$, $i\in I$ a family of unital $C(X)$-algebras. If all $A_i$ are continuous then so is the pushout
  \begin{equation*}
    A:=\Asterisk_{C(X)} A_i.
  \end{equation*}  
\end{theorem}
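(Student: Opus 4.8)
The plan is to unwind everything to a semicontinuity statement for fibre norms and feed it the continuity of the individual $A_i$ through a weak-containment reformulation of $C(X)$-continuity (the ``Fell-topology characterization'' of the abstract) together with the universal property of the pushout. Write $\pi_x\colon A\to A_x:=A/\overline{C_xA}$ for the fibre quotients ($C_x\subseteq C(X)$ the ideal of functions vanishing at $x$). As for any $C(X)$-algebra, $x\mapsto\|\pi_x(a)\|$ is automatically upper semicontinuous, so the content of the theorem is its \emph{lower} semicontinuity for every $a\in A$.

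First I would record two preliminary reductions. \emph{(a) Fibres of a pushout are pushouts of fibres.} Both $\pi_x$ and the pushout are colimit constructions in unital $C^*$-algebras, so they commute: there is a canonical isomorphism $A_x\cong\Asterisk_{\bC}(A_i)_x$ under which the structure map $A_i\hookrightarrow A\xrightarrow{\pi_x}A_x$ is the composite $A_i\xrightarrow{(\pi_i)_x}(A_i)_x\hookrightarrow A_x$, with $(\pi_i)_x$ the fibre quotient of $A_i$. \emph{(b) Fell reformulation.} A unital $C(X)$-algebra $B$ is continuous if and only if for every $x_0\in X$, every net $x_\alpha\to x_0$ in $X$, and every tail $\alpha\ge\alpha_0$ one has the weak containment $\pi^B_{x_0}\prec\{\pi^B_{x_\alpha}:\alpha\ge\alpha_0\}$, i.e.\ $\ker\pi^B_{x_0}\supseteq\bigcap_{\alpha\ge\alpha_0}\ker\pi^B_{x_\alpha}$; equivalently, $\pi^B_{x_0}$ lies in the Fell closure of every tail of the net $(\pi^B_{x_\alpha})$ of representations of $B$. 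I would prove (b) directly: the tail-indexed weak containments give $\|\pi^B_{x_0}(b)\|\le\sup_{\alpha\ge\alpha_0}\|\pi^B_{x_\alpha}(b)\|$ for every $\alpha_0$, hence $\|\pi^B_{x_0}(b)\|\le\limsup_\alpha\|\pi^B_{x_\alpha}(b)\|$ along every net, which combined with upper semicontinuity forces $\|\pi^B_{x_\alpha}(b)\|\to\|\pi^B_{x_0}(b)\|$; the converse is immediate.

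The main line would then run as follows. Fix $a$ in the (dense) algebraic pushout, a point $x_0$, a net $x_\alpha\to x_0$, a tail $\alpha\ge\alpha_0$ and $\epsilon>0$; normalise $\|\pi_{x_0}(a)\|=1$ and write $a$ as a finite sum of words $b_1\cdots b_n$ with $b_k\in A_{i_k}$. Choose a representation $\sigma$ of $A_{x_0}$ and a unit vector $\eta$ with $\|\sigma(\pi_{x_0}a)\eta\|>1-\epsilon$, put $\bar\sigma:=\sigma\circ\pi_{x_0}$, and note that by (a) the restriction $\bar\sigma_i:=\bar\sigma|_{A_i}$ factors through $(A_i)_{x_0}$. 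Applying (b) to each (continuous) $A_i$, each $\bar\sigma_i$ is Fell-approximated, on the finitely many letters of $a$ lying in $A_i$ and to arbitrary accuracy, by representations of $A_i$ supported on finitely many fibres $(A_i)_{x_\alpha}$ with $\alpha\ge\alpha_0$. I would then \emph{assemble} these data: after passing to a common finite set of points $x_{\alpha_1},\dots,x_{\alpha_m}$ along the tail and to a common (amplified) Hilbert space, and using the universal property of $\Asterisk_{C(X)}(-)$ together with the fact that the central characters $\mathrm{ev}_{x_{\alpha_j}}$ converge to $\mathrm{ev}_{x_0}$ (since $x_\alpha\to x_0$), one produces a representation $\tau=\bigoplus_j\tau^{(j)}$ of $A$, each $\tau^{(j)}$ factoring through $\pi_{x_{\alpha_j}}$, together with a unit vector whose value on $a^*a$ approximates $\langle\bar\sigma(a^*a)\eta,\eta\rangle$ within $\epsilon$. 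Since any such $\tau$ satisfies $\|\tau(a)\|\le\max_j\|\pi_{x_{\alpha_j}}(a)\|\le\sup_{\alpha\ge\alpha_0}\|\pi_{x_\alpha}(a)\|$, this gives $\sup_{\alpha\ge\alpha_0}\|\pi_{x_\alpha}(a)\|\ge 1-2\epsilon$; letting $\epsilon$ and then $\alpha_0$ vary and using density yields $\pi_{x_0}\prec\{\pi_{x_\alpha}:\alpha\ge\alpha_0\}$, so $A$ is continuous by (b).

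The crux, and where I expect the real work to sit, is the assembly step. Weak containment only supplies approximations by representations living on \emph{several} fibres at once and, a priori, on \emph{unrelated} Hilbert spaces for the different factors $A_i$; and because matching $\bar\sigma$ on a mixed word $b_1b_2\cdots b_n$ is a statement about the \emph{joint} action of the $\bar\sigma_i$ rather than about their separate marginals, one cannot simply take free products of states. The fix has to operate at the level of representations (GNS data), reconciling the finitely many approximating points across all factors on one amplified space and gluing the per-factor representations there — this is exactly where the free-product-of-states / amalgamated-free-product machinery enters — while keeping control of the joint behaviour on the finitely many letters of $a$. Once isolated, this is precisely the content that reproves \cite[Theorem 3.7]{bl-fr} (continuity of free-product fields); recasting continuity in the Fell-topological form (b) is the device that makes the gluing tractable.
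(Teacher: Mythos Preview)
Your proposal follows the same route as the paper: both pass through a Fell-topology reformulation of $C(X)$-continuity (your (b) is the paper's \Cref{le:fell}), restrict a fibre representation $\rho$ of $A$ to the factors $A_i$, Fell-approximate each restriction using the continuity of $A_i$, and then assemble. The difference is in the assembly, which you rightly flag as the crux but overcomplicate. The paper's device is simpler than what you outline: after replacing $\rho$ by $\rho^{\oplus\aleph_0}$, \cite[Lemma~2.4]{fell-wk1} upgrades the per-factor Fell convergence $\rho_{i,\alpha}\to\rho_i$ to \emph{strong operator} convergence on the very Hilbert space $H$ carrying $\rho$. With both $\rho_{1,\alpha}$ and $\rho_{2,\alpha}$ realized on that same $H$ and restricting to $C(X)$ as the scalar character $p_{x_\alpha}$, they glue via the universal property of the pushout to a single representation $\rho_\alpha$ of $A$ on $H$; strong convergence on the letters then gives strong convergence on words, hence $\rho_\alpha\to\rho$ in the Fell topology. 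So no free-product-of-states machinery is needed, and each approximant sits over a \emph{single} point $x_\alpha$ rather than over a finite set of them. (Incidentally, the paper first reduces to $|I|=2$ via the filtered-colimit argument of \Cref{th:filt}; your finitely-many-letters bookkeeping would handle general $I$ directly.)
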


%%%%%%%%%%%%%%%%%%%%%%%%%%%%%%%%%%%%%%%%%%%%%%%%%%%%%%%%%%%%%%%%%%%%%%%%%%%%%%%%%%%%%%%%%%%%%%%%%%%%%%%%%%%%%%%%%%
\subsection*{Acknowledgements}

This work is partially funded by NSF grant DMS-2001128.

I am grateful for numerous enlightening conversations with Amaury Freslon, Mike Brannan and Ami Viselter on and around the topics covered here.

%%%%%%%%%%%%%%%%%%%%%%%%%%%%%%%%%%%%%%%%%%%%%%%%%%%%%%%%%%%%%%%%%%%%%%%%%%%%%%%%%%%%%%%%%%%%%%%%%%%%%%%%%%%%%%%%%%
%%%%%%%%%%%%%%%%%%%%%%%%%%%%%%%%%%%%%%%%%%%%%%%%%%%%%%%%%%%%%%%%%%%%%%%%%%%%%%%%%%%%%%%%%%%%%%%%%%%%%%%%%%%%%%%%%%
\section{Preliminaries}\label{se.prel}

$C^*$-algebras are not assumed unital unless we do so explicitly, and morphisms are defined as is customary for generally-non-unital $C^*$-algebras (e.g. \cite[Notations and conventions]{kv-lcqg}):

\begin{definition}\label{def:mor}
  Let $A$ and $B$ be two possibly-non-unital $C^*$-algebras. A {\it morphism} $A\to B$ is a linear, bounded, multiplicative and $*$-preserving map $f:A\to M(B)$ (the {\it multiplier algebra} of $B$; \cite[\S 2.2]{wo} or \cite[\S II.7.3]{blk-oa}) that is {\it non-degenerate} in the sense that
  \begin{equation*}
    f(A)B:=\text{span}\{f(a)b\ |\ a\in A,\ b\in B\}
  \end{equation*}
  is dense in $B$. 
\end{definition}

As noted in \cite[Notations and conventions]{kv-lcqg}, morphisms $A\to B$ in this sense extend uniquely to unital morphisms $M(A)\to M(B)$ that are {\it strictly continuous} on bounded subsets. Recall that strict continuity means continuity with respect to the seminorms
\begin{equation*}
  M(A)\ni x\mapsto \|xa\| + \|ax\|,\ a\in A. 
\end{equation*}

%%%%%%%%%%%%%%%%%%%%%%%%%%%%%%%%%%%%%%%%%%%%%%%%%%%%%%%%%%%%%%%%%%%%%%%%%%%%%
\subsection{Locally compact quantum groups}\label{subse:lcqg}

We will need some background on these (abbreviated as LCQGs) as introduced in \cite{kv-lcqg}. Additional sources include the excellent textbook \cite{Ti08} as well as various other papers cited in the process of (very briefly) recalling some of the relevant notions.

In addition to the structure reviewed briefly in the introduction, on can define, for an LCQG $G$,

\begin{itemize}
\item the {\it universal} version $C_0^u(G)$ \cite[\S 11]{kus-univ} that is again a $C^*$-algebra equipped with a coassociative morphism $\Delta:C^u_0(G)\to C_0^u(G)^{\otimes 2}$ and a surjection $C_0^u(G)\to C_0^r(G)$ intertwining the comultiplications;
\item the {\it Pontryagin dual} $\widehat{G}$ of $G$ (\cite[Definition 8.3.14]{Ti08}), whose underlying universal $C^*$-algebra $C_0^u(\widehat{G})$ analogizes the universal group algebra of $G$ (in particular, {\it representations} of $G$ on Hilbert spaces are precisely representations of $C_0^u(\widehat{G})$ as a $C^*$-algebra \cite[\S 5]{kus-univ}). 
\end{itemize}

The following version of the notion of discreteness will be most directly applicable below (see also \cite[\S 3.3]{Ti08} or \cite{vd-disc}).

\begin{definition}
  $G$ is {\it discrete} if $C_0^r(\widehat{G})$ is unital.
\end{definition}

Regarding the relationship between $C_0^u$ and $C_0^r$, recall \cite[Definition 3.1, Theorem 3.1]{bt-lcqg}:

\begin{definition}\label{def:coam}
  A locally compact quantum group $G$ is {\it coamenable} if either of the two following equivalent conditions holds:
  \begin{itemize}
  \item there is a character $\varepsilon:C_0^r(G)\to \bC$ such that $(\id\otimes\varepsilon)\Delta=\id_{C_0^r(G)}$;
  \item the surjection $C_0^u(G)\to C_0^r(G)$ is an isomorphism. 
  \end{itemize}
\end{definition}

As in the classical setting, we can talk about closed quantum subgroups (\cite[Definition 2.5]{vaes-imp} and \cite[Definitions 3.1 and 3.2]{dkss}):

\begin{definition}
  Let $G$ be a locally compact quantum group. 
  \begin{enumerate}[(a)]
  \item A {\it (Vaes-)closed quantum subgroup} $H\le G$ is a locally compact quantum group $H$ equipped with a normal embedding
    \begin{equation*}
      L^{\infty}(\widehat{H})\to L^{\infty}(\widehat{G})
    \end{equation*}
    intertwining the comultiplications. 
  \item This then induces a surjection $C_0^u(G)\to C_0^u(H)$, thus realizing $H$ as a {\it Woronowicz-closed quantum subgroup of $G$} (\cite[Definition 3.2, Theorems 3.5 and 3.6]{dkss}).
  \item The (Vaes-)closed $H\le G$ is {\it central} \cite[\S 1.1]{kss-cent} if
    \begin{equation*}
      L^{\infty}(\widehat{H})\subseteq L^{\infty}(\widehat{G})
    \end{equation*}
    is contained in the center. 
  \end{enumerate}
\end{definition}

%%%%%%%%%%%%%%%%%%%%%%%%%%%%%%%%%%%%%%%%%%%%%%%%%%%%%%%%%%%%%%%%%%%%%%%%%%%%%
\subsection{Fields of $C^*$-algebras}

Denoting, as usual, by $\co{X}$ the algebra of continuous functions vanishing at infinity on a locally compact Hausdorff space $X$, we work with $\co{X}$-algebras in the sense of \cite[Introduction]{bl-was} or \cite[Definition 2.1]{bl-k} (see also \cite[Definition 1.1]{bl-tens} and \cite[Definition 2.2]{bl-fr} for the case of compact $X$):

\begin{definition}
  A {\it $\co{X}$-algebra} is a (possibly non-unital) $C^*$-algebra $A$ equipped with a non-degenerate morphism from $\co{X}$ to the center of the multiplier algebra $M(A)$. 
\end{definition}

One can form, for every point $x\in X$, the {\it fiber} $A_x$ of $A$ at $x$ as
\begin{equation*}
  A_x = A/\{\text{ideal generated by $ma$ and $am$}\}
\end{equation*}
where $a\in A$ and $m\in M(A)$ ranges over the image through $\co{X}\to M(A)$ of the ideal of functions vanishing at $x$.

For $a\in A$ we follow \cite{bl-fr} in denoting by $a_x$ the image of $a$ through $A\to A_x$, and when the need arises to distinguish between the norms of the various $A_x$ we write $\|\cdot\|_x$ for the latter. Recall \cite[D\'efinition 3.1]{blnch} (see also \cite[Definition 2.2]{bl-k} or \cite[discussion following Definition 2.2]{bl-fr}):

\begin{definition}\label{def:contf}
  The $\co{X}$-algebra $A$ is {\it continuous} as such, or {\it a continuous field} over $X$ if
  \begin{equation}\label{eq:iscont}
    X\ni x\mapsto \|a_x\|
  \end{equation}
  is continuous for every $a\in A$.  
\end{definition}

\begin{remark}\label{re:low}
  Note that \Cref{eq:iscont} is always {\it upper} semicontinuous \cite[Proposition 1.2]{rieff-flds}, so it is lower semicontinuity that is the core issue motivating \Cref{def:contf}.
\end{remark}

When working with {\it pushouts} (e.g. \cite{bl-fr,ped-psh}) we specialize to the unital case: $A_i$ will typically be unital algebras equipped with unital morphisms $C(X)\to A_i$, thus allowing the formation of the pushout
\begin{equation}\label{eq:psh}
  \Asterisk_{C(X)} A_i.
\end{equation}

To make sense of \Cref{le:fell} below we need some background on the {\it Fell topology} on (unitary isomorphism classes of) representations and on {\it weak containment}. We refer to \cite{fell-wk1,fell-smth}, \cite[Chapter 3]{dix-cast} and \cite[Appendix F]{bhv} for material on these topics.

In the statement of \Cref{le:fell}, for a point $x\in X$
\begin{equation*}
  p_x:\co{X}\to \bC
\end{equation*}
denotes the evaluation character, regarded as a $\co{X}$-representation.

\begin{lemma}\label{le:fell}
  A $\co{X}$-algebra $A$ is a continuous field if and only if for every convergent net
  \begin{equation}\label{eq:net}
    x_{\alpha}\to x\in X
  \end{equation}
  every representation of $A$ that factors through $A\to A_x$ is a Fell limit a net of representations that factor through $A\to A_{x_{\alpha}}$.
\end{lemma}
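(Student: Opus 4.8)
The plan is to translate the statement about norms in \Cref{def:contf} into the language of weak containment, using the basic principle that for a $C^*$-algebra $B$ and an element $b$, the norm $\|\pi(b)\|$ over all representations $\pi$ in a given set $S$ of representations recovers $\|b\|$ in the quotient of $B$ by the common kernel of $S$, and more precisely that $\sup_{\pi\in S}\|\pi(b)\|$ depends only on the weak-containment closure of $S$. Concretely, I would first observe that for $a\in A$ and $x\in X$ one has
\begin{equation*}
  \|a_x\| = \sup\{\|\pi(a)\| \ :\ \pi \text{ a representation of } A \text{ factoring through } A\to A_x\},
\end{equation*}
since $A_x$ is by definition a quotient of $A$ and every $C^*$-algebra has a faithful representation; and similarly $\|a_{x_\alpha}\| = \sup\{\|\pi(a)\|\}$ over representations factoring through $A\to A_{x_\alpha}$. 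The upper semicontinuity recalled in \Cref{re:low} means $\limsup_\alpha \|a_{x_\alpha}\| \le \|a_x\|$ always holds, so continuity of $x\mapsto\|a_x\|$ is equivalent to the lower bound $\|a_x\|\le\liminf_\alpha\|a_{x_\alpha}\|$ for every net $x_\alpha\to x$ and every $a$.

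Next I would set up the dictionary between the fibers and the $C(X)$-structure at the level of representations. A representation $\pi$ of $A$ restricts, via $\co{X}\to ZM(A)$, to a representation of the commutative algebra $\co{X}$, hence (after decomposing, or by non-degeneracy and taking the support) is associated to a point or a closed subset of $X$; $\pi$ factors through $A\to A_x$ precisely when that associated $\co{X}$-representation is (a multiple of) $p_x$. Thus the right-hand side of the desired lower bound is controlled by the set $S_\alpha$ of representations ``living over $x_\alpha$'', and the key point is that the condition ``every representation factoring through $A_x$ is a Fell limit of representations in $\bigcup_\alpha S_\alpha$ (respecting the net)'' is exactly weak containment of the $A_x$-representations in the family $(S_\alpha)_\alpha$. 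Once this is in place, the forward direction runs as follows: assuming continuity, fix $x_\alpha\to x$ and a representation $\pi$ of $A$ factoring through $A_x$; I want to produce representations $\pi_\alpha$ factoring through $A_{x_\alpha}$ with $\pi_\alpha\to\pi$ in the Fell topology. By continuity, for each $a$ we have $\|\pi(a)\|\le\|a_x\|=\lim_\alpha\|a_{x_\alpha}\|$, so $\pi$ is weakly contained in $\bigsqcup_\alpha A_{x_\alpha}$ (viewed as quotients of $A$); a standard subnet/ultrafilter argument then upgrades ``weakly contained in the union'' to ``a Fell limit along the net'', using that the Fell topology on representations of the fixed $C^*$-algebra $A$ is determined by the functions $\pi\mapsto\|\pi(a)\|$ together with a choice of unit vectors (matrix coefficients). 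For the converse, suppose the net condition holds; given $a\in A$ and $x_\alpha\to x$, pick a representation $\pi$ of $A$ factoring through $A_x$ with $\|\pi(a)\|=\|a_x\|$, write it as a Fell limit of $\pi_\alpha$ factoring through $A_{x_\alpha}$, and conclude $\|a_x\|=\|\pi(a)\|\le\liminf_\alpha\|\pi_\alpha(a)\|\le\liminf_\alpha\|a_{x_\alpha}\|$, which together with upper semicontinuity gives continuity at $x$.

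The main obstacle I anticipate is the precise bookkeeping in the passage between ``weak containment of $\pi$ in $\bigcup_\alpha S_\alpha$'' and ``$\pi$ is a Fell limit of a net $(\pi_\alpha)$ indexed by the \emph{same} directed set as $x_\alpha\to x$, with $\pi_\alpha$ factoring through $A_{x_\alpha}$.'' Weak containment in a union of sets of representations is, a priori, only a statement about finite subsets and $\varepsilon$'s, so extracting a net indexed compatibly with $(x_\alpha)$ requires care: one forms the product directed set of $\alpha$ together with finite sets of matrix coefficients and tolerances, chooses for each such index a representation among those factoring through some $A_{x_\beta}$ with $\beta\ge\alpha$ that is close enough, and then checks this reindexed net still converges to $\pi$ while each member factors through the appropriate fiber. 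This is the kind of argument that is routine in spirit (it is the standard ``weak containment in a family $\Leftrightarrow$ Fell-limit of a net from that family'' lemma, as in \cite[Appendix F]{bhv} and \cite{fell-wk1}) but needs to be stated carefully here because the indexing by $X$ is what makes the continuity statement bite. A secondary, minor point is handling non-degeneracy/multiplier subtleties when restricting representations of a possibly non-unital $A$ to $\co{X}$, but this is taken care of by the standing conventions on morphisms in \Cref{def:mor}.
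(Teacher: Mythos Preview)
Your proposal is correct and follows the same basic translation between fiberwise norms and weak containment that the paper uses, but the execution diverges in instructive ways in each direction.

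For $(\Rightarrow)$ the paper bypasses your ``main obstacle'' entirely. Rather than first establishing weak containment of $\pi$ in the union $\bigcup_\alpha S_\alpha$ and then reindexing to produce a net over the same directed set, the paper simply \emph{chooses} $\rho_\alpha$ to be any representation of $A$ that is faithful on $A_{x_\alpha}$. Then for every tail $\{\rho_\beta:\beta\ge\alpha_0\}$ one has
\[
  \bigcap_{\beta\ge\alpha_0}\ker\rho_\beta \;=\; \{a\in A : \|a_{x_\beta}\|=0 \text{ for all } \beta\ge\alpha_0\},
\]
and continuity of the field applied to the tail net $x_\beta\to x$ forces any such $a$ to satisfy $\|a_x\|=0$, hence $a\in\ker\rho$. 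Thus $\rho$ is weakly contained in every tail, which is precisely Fell convergence $\rho_\alpha\to\rho$; no product-of-directed-sets bookkeeping is needed. Your reindexing argument would work, but it is unnecessary once one realizes the faithful-fiber choice does the job directly.

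For $(\Leftarrow)$ the roles reverse: your argument is the cleaner one. You use the fact that Fell convergence $\pi_\alpha\to\pi$ gives $\|\pi(a)\|\le\liminf_\alpha\|\pi_\alpha(a)\|$ (indeed, approximate a vector state of $\pi$ at $a^*a$ by vector states of $\pi_\alpha$ for all large $\alpha$), and then bound by $\liminf_\alpha\|a_{x_\alpha}\|$. The paper instead passes through the concrete realization of \cite[Lemma~2.4]{fell-wk1}: after replacing each representation by a countable amplification, one places all of them on a single Hilbert space with $\rho_\alpha(a)\xi\to\rho(a)\xi$ in norm, and reads off $\|a_x\|\le\limsup_\alpha\|a_{x_\alpha}\|$ from that. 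Both routes are valid; the paper's has the advantage that the same concrete strong-operator convergence is exactly what is reused later in the proof of \Cref{th:psh-cont}, where one needs the $\rho_{i,\alpha}$ to live on a common space in order to amalgamate them into a representation of the pushout.
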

\begin{proof}
  We prove the two implications separately.

  {\bf $(\Rightarrow)$} Suppose $A$ is continuous over $X$. To prove the desired conclusion we have to show (\cite[Theorem 3.4.4]{dix-cast}) that for a net \Cref{eq:net} and a representation
  \begin{equation}\label{eq:rho}
    \rho:A\to A_x\to B(H)
  \end{equation}
  where $\co{X}$ acts via $p:=p_x$ we can find representations
  \begin{equation*}
    \rho_{\alpha}:A\to B(H_{\alpha})
  \end{equation*}
  such that
  \begin{itemize}
  \item $\rho_{\alpha}$ restricts to $\co{X}$ as the character $p_{\alpha}$, and
  \item the intersection of the kernels of $\rho_{\alpha}$ is contained in $\ker \rho$. 
  \end{itemize}
Now choose a net of representations
  \begin{equation}\label{eq:rhoa}
    \rho_{\alpha}:A\to A_{x_{\alpha}} \to B(H_{\alpha}),
  \end{equation}
  faithful on $A_{x_{\alpha}}$ respectively. By the continuity of the field $A$ over $X$, any element annihilated by all $\rho_{\alpha}$ must also be annihilated by $A\to A_x$, and hence by \Cref{eq:rho}. 

  {\bf $(\Leftarrow)$} Conversely, to prove that $A$ is continuous over $X$ we have to argue that for every convergent net \Cref{eq:net} and every $a\in A$ we have
  \begin{equation}\label{eq:lmsp}
    \|a_x\|\le \limsup_{\alpha}\|a_{x_{\alpha}}\|. 
  \end{equation}
  The hypothesis says that we can find a net of representations \Cref{eq:rhoa} Fell-converging to some representation \Cref{eq:rhoa} faithful on $A_x$. Since the Fell topology does not distinguish between sums of copies of a given representation (regardless of the cardinality of the set of summands), we may as well assume that
  \begin{equation*}
    \rho\cong \rho^{\oplus\aleph_0}
  \end{equation*}
  and similarly for all $\rho_{\alpha}$. But in that case \cite[Lemma 2.4]{fell-wk1} shows that
  \begin{itemize}
  \item we can realize $\rho$ concretely and non-degenerately on some large Hilbert space $H$
  \item which also houses (possibly degenerate) copies of $\rho_{\alpha}$
  \item so that for every $a\in A$ and every $\xi\in H$ we have
    \begin{equation*}
      \lim_{\alpha}\|\rho(a)\xi-\rho_{\alpha}(a)\xi\|=0. 
    \end{equation*}
  \end{itemize}
  Since $\rho$ is assumed faithful on $A_x$, \Cref{eq:lmsp} follows.
\end{proof}

%%%%%%%%%%%%%%%%%%%%%%%%%%%%%%%%%%%%%%%%%%%%%%%%%%%%%%%%%%%%%%%%%%%%%%%%%%%%%%%%%%%%%%%%%%%%%%%%%%%%%%%%%%%%%%%%%%
%%%%%%%%%%%%%%%%%%%%%%%%%%%%%%%%%%%%%%%%%%%%%%%%%%%%%%%%%%%%%%%%%%%%%%%%%%%%%%%%%%%%%%%%%%%%%%%%%%%%%%%%%%%%%%%%%%
\section{Fields over central locally compact quantum groups}\label{se:lcqg}

We begin with

\begin{theorem}\label{th:1gp}
  Let $G$ be an LCQG with coamenable dual and $H\le G$ a central closed quantum subgroup.
  \begin{enumerate}[(a)]
  \item\label{item:1} If $G/H$ also has coamenable dual then $\couh{H}\to \couh{G}$ is a continuous field.
  \item\label{item:2} The hypothesis in \Cref{item:1} is automatic if $G$ is discrete.  
  \end{enumerate}
\end{theorem}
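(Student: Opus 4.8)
The plan is to prove part (a) via the Fell-topology criterion of \Cref{le:fell}, and then observe that part (b) follows because discreteness forces the dual $\widehat{G}$ to have a tracial Haar state, which makes $\widehat{G}$ (and hence the quotient $G/H$, whose dual embeds into $\widehat{G}$) automatically coamenable; alternatively one invokes that for discrete $G$ one has $C_0^u(\widehat{G})=C^*(\widehat{G})$ with the dual being a compact quantum group of Kac type, for which coamenability of $G/H$ is known. So the substance is part (a).

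For (a), fix a convergent net $x_\alpha\to x$ in $\widehat{H}$ — more precisely, the relevant base space is the spectrum of the central commutative $C^*$-algebra $C_0^u(\widehat{H})$ sitting inside $M(C_0^u(\widehat{G}))$; centrality of $H\le G$ is exactly what makes $C_0^u(\widehat{G})$ a $C_0(\widehat{H})$-algebra in the sense of the definitions above. I would first identify the fiber of $C_0^u(\widehat{G})$ over a point $x\in\widehat{H}$: using the central short exact sequence $C_0^u(\widehat{H})\to C_0^u(\widehat{G})\to C_0^u(\widehat{G/H})$ arising from $G/H$ and the coaction of $\widehat{H}$, the fiber $A_x$ should be identified with a twisted/crossed-product-type algebra built from $C_0^u(\widehat{G/H})$ and the character $x$ — concretely, representations of $A_x$ are precisely the representations of $\widehat{G}$ on which the central subalgebra acts by the scalar character $x$. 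Then, given a representation $\rho$ of $A_x$, I want to exhibit it as a Fell limit of representations $\rho_\alpha$ of $A_{x_\alpha}$. The key mechanism is: coamenability of $\widehat{G/H}$ means $C_0^u(\widehat{G/H})=C_0^r(\widehat{G/H})$, so $\widehat{G/H}$ has a one-dimensional "trivial" corepresentation, i.e. its trivial representation is weakly contained in the regular one; pushing this through, one can "tensor" a representation with central character $x_\alpha$ by something close to the trivial representation of $G/H$ to move the central character from $x$ to $x_\alpha$ while staying Fell-close. Concretely, I would build $\rho_\alpha$ as (a summand of) $\rho\otimes\pi_\alpha$ where $\pi_\alpha$ is a one-dimensional-up-to-approximation representation of $\widehat{H}$ implementing the shift $x\mapsto x_\alpha$, made available by coamenability of $\widehat{H}$ itself (which follows from coamenability of $\widehat{G}$, as $\widehat{H}\le\widehat{G}$ is a closed quantum subgroup and coamenability passes to closed quantum subgroups of coamenable duals — or directly, $\widehat{H}$ is commutative hence coamenable). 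The continuity of $\widehat{H}$ as a group — i.e. that near-equal characters $x,x_\alpha$ of the commutative algebra $C_0^u(\widehat{H})$ are connected by an approximately-one-dimensional shift — is what lets $\rho\otimes\pi_\alpha$ converge to $\rho$ in the Fell topology.

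Assembling this: given $\rho: A\to A_x\to B(H)$ with $C_0^u(\widehat{H})$ acting by $x$, and $x_\alpha\to x$, set $\rho_\alpha:=(\rho\otimes\varepsilon_\alpha)\circ\Delta$ where $\varepsilon_\alpha$ is a state on $C_0^u(\widehat{G})$ that is a character on the central part sending it to $x_\alpha x^{-1}$ (using the group structure of the Pontryagin dual of the abelian $\widehat{H}$) and is close to the counit on the rest — such states exist by coamenability. Then $\rho_\alpha$ has central character $x_\alpha$, hence factors through $A_{x_\alpha}$, and as $\varepsilon_\alpha\to\varepsilon$ weak-$*$ one checks $\rho_\alpha\to\rho$ in the Fell topology via the matrix-coefficient criterion (\cite[Theorem 3.4.4 and §3.4]{dix-cast}). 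Upper semicontinuity being free (\Cref{re:low}), \Cref{le:fell} then yields continuity of the field.

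The main obstacle I anticipate is making the "shift of central character" construction precise in the quantum setting: in the classical case one literally tensors with a character of $H$ extended to $G$, but for quantum $G$ one must work with the comultiplication and the coaction of $\widehat{H}$ on $C_0^u(\widehat{G})$, and verify both that $\rho_\alpha$ genuinely has the prescribed central character (this should follow from centrality of $H$, which makes the $\widehat{H}$-coaction on $C_0^u(\widehat{G})$ behave like a grading) and that the states $\varepsilon_\alpha$ with the required two properties — prescribed on the center, close to the counit elsewhere — actually exist, which is where the coamenability of $\widehat{G}$ (equivalently the coamenability hypothesis on $G/H$, supplying an approximate counit compatible with the quotient structure) enters essentially. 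I expect the verification that $\varepsilon_\alpha\to\varepsilon$ strictly/weak-$*$ forces Fell convergence of $\rho_\alpha\to\rho$ to be routine once the setup is in place, using that Fell convergence of cyclic representations is detected by convergence of the associated states and that a faithful representation of $A_x$ is obtained by taking a suitably large multiplicity.
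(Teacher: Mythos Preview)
Your strategy for part (a) matches the paper's: invoke \Cref{le:fell} and produce the Fell-approximating net by tensoring $\rho$ with representations carrying the $H$-central character $x_\alpha x^{-1}$. The gap is in the implementation. The map $(\rho\otimes\varepsilon_\alpha)\circ\Delta$ is not a $*$-representation when $\varepsilon_\alpha$ is merely a state, so as written you have not constructed a $\rho_\alpha$; and the existence of states that are simultaneously the character $x_\alpha x^{-1}$ on the central copy of $\couh{H}$ and close to the counit ``on the rest'' is exactly the obstacle you flag but never resolve. The paper sidesteps both problems by using \emph{induced} representations rather than states: one tensors with $\mathrm{Ind}_H^G(p_{x_\alpha x^{-1}})$, which is an honest unitary representation of $G$ with central $H$-character $x_\alpha x^{-1}$. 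Coamenability of the dual of $G/H$ yields the weak containment $\mathbf{1}_G\preceq \mathrm{Ind}_H^G(\mathbf{1}_H)$, hence $\rho\cong\rho\otimes\mathbf{1}_G\preceq \rho\otimes\mathrm{Ind}_H^G(\mathbf{1}_H)$; the right-hand side is then the Fell limit of $\rho\otimes\mathrm{Ind}_H^G(p_{x_\alpha x^{-1}})$, and these have central character $x\cdot x_\alpha x^{-1}=x_\alpha$ as required. This is the missing idea that makes your ``shift of central character'' precise.

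For part (b) your reasoning is wrong: discreteness of $G$ makes $\widehat{G}$ compact but does \emph{not} force its Haar state to be tracial (that is the Kac condition, which is independent), nor would traciality imply coamenability. The paper's route is via the equivalence, for discrete quantum groups, of amenability and coamenability of the dual \cite[Corollary 9.6]{bct-reps}: the hypothesis that $\widehat{G}$ is coamenable gives $G$ amenable, amenability passes to the quotient $G/H$ by \cite[Theorem 3.2]{cr-her}, and then $G/H$ discrete and amenable gives $\widehat{G/H}$ coamenable.
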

\begin{proof}
We treat the two clauses separately.
  
  {\bf Part \Cref{item:1}} We mimic the proof of \cite[Theorem 3.2]{chi-rf}. Having chosen a convergent net
  \begin{equation*}
    x_{\alpha}\to x\in \widehat{H}
  \end{equation*}
  of characters, we have to argue that an arbitrary unitary representation $\rho$ of $G$ on which $H$ acts via $p_{x}$ is in the Fell closure of a family of unitary representations where $H$ acts by $p_{x_{\alpha}}$. The coamenability condition ensures that $G/H$ has coamenable dual and hence its regular representation weakly contains its trivial representation ${\bf 1}_{G/H}$. We thus similarly have the weak containment
  \begin{equation*}
    {\bf 1}_G\preceq \mathrm{Ind}_H^G({\bf 1}_H),
  \end{equation*}
  and it follows that
  \begin{equation*}
    \rho\cong \rho\otimes {\bf 1}_G\preceq \rho\otimes \mathrm{Ind}_H^G({\bf 1}_H);
  \end{equation*}
  the latter representation is a Fell-topology limit of
  \begin{equation*}
    \rho\otimes \mathrm{Ind}_H^G(p_{x_{\alpha}x^{-1}})
  \end{equation*}
  where $H$ acts respectively by $p_{x_{\alpha}}$, hence the conclusion.

  {\bf Part \Cref{item:2}} According to \cite[Theorem 3.2]{cr-her} the amenability of $G$ entails that of $G/H$ ,whereas by \cite[Corollary 9.6]{bct-reps} amenability and dual coamenability are equivalent for discrete quantum groups.
\end{proof}

Next, \Cref{th:1gp} extends to arbitrary pushouts.

\begin{theorem}\label{th:main}
  Let $G_i$, $i\in I$ be a family of discrete quantum groups with coamenable duals and a common central closed quantum subgroup $H\le G_i$. Then, the $C^*$ pushout
  \begin{equation*}
    \Asterisk_{C^u\left(\widehat{H}\right)} C^u\left(\widehat{G_i}\right)
  \end{equation*}
  is a continuous field over the commutative $C^*$-algebra $C^u\left(\widehat{H}\right)$.
\end{theorem}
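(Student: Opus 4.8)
The plan is to combine \Cref{th:1gp} (each $C^u(\widehat{G_i})$ is a continuous field over $C^u(\widehat{H})$) with \Cref{th:psh-cont}, the general statement that pushouts of unital continuous fields over a compact Hausdorff space are again continuous. First I would observe that since each $G_i$ is discrete, each $C^u(\widehat{G_i})$ is unital, and $H$, being a central closed quantum subgroup of a discrete quantum group, is itself a discrete (in fact compact-type-dual) quantum group; in particular $C^u(\widehat{H})$ is unital. Moreover centrality of $H\le G_i$ forces $C^u(\widehat{H})$ to be commutative: the embedding $L^\infty(\widehat{H})\hookrightarrow L^\infty(\widehat{G_i})$ lands in the center, and on the $C^*$-level the image of $C^u(\widehat{H})$ in $M(C^u(\widehat{G_i}))$ is central, so $C^u(\widehat{H})$ is a commutative unital $C^*$-algebra, say $C^u(\widehat{H})\cong C(X)$ for a compact Hausdorff space $X$ (the spectrum of $\widehat{H}$, i.e. the set of characters appearing in the nets of \Cref{th:1gp}).

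Next I would check that the structure maps are the right ones to apply \Cref{th:psh-cont}. The surjection $C^u_0(G_i)\to C^u_0(H)$ dualizes, on the group-algebra side, to the morphism $C^u(\widehat{H})\to C^u(\widehat{G_i})$ coming from the closed-quantum-subgroup structure; by \Cref{th:1gp}\Cref{item:1}--\Cref{item:2} this exhibits $C^u(\widehat{G_i})$ as a continuous $C(X)$-algebra, with the copy of $C(X)$ sitting centrally (again by centrality of $H$). Thus the hypotheses of \Cref{th:psh-cont-intro} are met: $X$ compact Hausdorff, $\{A_i\}_{i\in I}$ with $A_i:=C^u(\widehat{G_i})$ unital continuous $C(X)$-algebras. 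The pushout $\Asterisk_{C(X)}C^u(\widehat{G_i})=\Asterisk_{C^u(\widehat{H})}C^u(\widehat{G_i})$ is then a continuous field over $C(X)=C^u(\widehat{H})$, which is the claim.

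The only point requiring genuine care is that the $C(X)$-algebra structure on $C^u(\widehat{G_i})$ furnished by \Cref{th:1gp} (implicitly, via the Fell-topology reformulation of \Cref{le:fell} and the fiberwise description of representations as those on which $H$ acts by a fixed character) coincides with the one coming from the canonical central morphism $C^u(\widehat{H})\to C^u(\widehat{G_i})$, so that ``pushout over $C^u(\widehat{H})$'' in the statement literally matches ``pushout over $C(X)$'' in \Cref{th:psh-cont}. This is a compatibility check rather than a new difficulty: both structures are determined by decomposing a representation of $G_i$ according to the central action of $H$, i.e. by the spectral decomposition over the characters of $\widehat{H}$, so the fibers $C^u(\widehat{G_i})_x$ agree and the two $C(X)$-module structures are the same. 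Once this identification is in place the result is immediate from \Cref{th:psh-cont}, and I expect no further obstacle; the substantive work has already been carried out in \Cref{th:1gp} and \Cref{th:psh-cont}.
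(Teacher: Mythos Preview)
Your proposal is correct and follows essentially the same route as the paper: invoke \Cref{th:1gp} to get continuity of each $C^u(\widehat{G_i})$ over $C^u(\widehat{H})\cong C(X)$, then apply \Cref{th:psh-cont}. The only cosmetic difference is that the paper's proof of \Cref{th:main} spells out the passage from two factors to finitely many (induction) to arbitrary $I$ (via \Cref{th:filt} on filtered colimits), whereas you appeal directly to \Cref{th:psh-cont} in its arbitrary-family form; since the proof of \Cref{th:psh-cont} itself performs exactly that reduction, the two presentations are equivalent.
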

\begin{proof}
  Once we have \Cref{th:1gp} we can conclude
  \begin{itemize}
  \item for pushouts of {\it two} CQGs by \Cref{th:psh-cont} (or \cite[Theorem 3.7]{bl-fr}, but see \Cref{subse:lit} for an aside on its proof), stating that
    \begin{equation*}
      A_1*_{C(X)}A_2
    \end{equation*}
    is continuous whenever $A_i$ are;
  \item for finite families $\{G_i\}$ by induction;
  \item in general, by taking filtered colimits over the finite subsets of the index set $I$ (\Cref{th:filt}).
  \end{itemize}
  This finishes the proof, modulo the last item regarding colimits.
\end{proof}

It remains to address the filtered-colimit claim that the proof of \Cref{th:main} punts on:

\begin{proposition}\label{th:filt}
  Let $X$ be a locally compact Hausdorff space, $(I,\le)$ a filtered poset, and
  \begin{equation*}
    \iota_{ji}:A_i\to A_j,\ \forall i\le j\in I
  \end{equation*}
  a functor from $I$ to the category of $\co{X}$-algebras.

  If all $A_i$ are continuous then so is
  \begin{equation*}
    A:=\varinjlim_{i\in I} A_i
  \end{equation*}
\end{proposition}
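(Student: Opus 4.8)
The plan is to reduce continuity of the colimit to continuity of the individual $A_i$ via the Fell-topology criterion of \Cref{le:fell}, exploiting the fact that a filtered colimit of $C^*$-algebras is the closure of the union of the images of the structure maps. First I would recall the concrete model: $A=\varinjlim_i A_i$ is the completion of $\bigcup_i \iota_i(A_i)$ (where $\iota_i:A_i\to A$ is the canonical map), so every $a\in A$ is a norm-limit of elements $\iota_i(a_i)$ with $a_i\in A_i$. I would also note that the $\co{X}$-algebra structure on $A$ is the one induced from the compatible structures on the $A_i$, so that the structure maps $\iota_{ji}$ and $\iota_i$ are morphisms of $\co{X}$-algebras; consequently, for each $x\in X$, the fiber $A_x$ is the filtered colimit of the fibers $(A_i)_x$ along the induced maps, and the quotient map $A\to A_x$ is compatible with $A_i\to (A_i)_x$.

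Next I would verify the Fell-limit condition of \Cref{le:fell}. Fix a convergent net $x_\alpha\to x$ in $X$ and a representation $\rho:A\to A_x\to B(H)$. Because $A_x=\varinjlim_i (A_i)_x$, the restriction of $\rho$ along each $\iota_i$ factors as $A_i\to (A_i)_x\to B(H)$; call this $\rho^{(i)}$. Since each $A_i$ is continuous, \Cref{le:fell} applied to $A_i$ produces, for each $i$, a net of representations $\sigma^{(i)}_\beta:A_i\to (A_i)_{x_\beta}\to B(H^{(i)}_\beta)$ Fell-converging to $\rho^{(i)}$. The task is then to assemble these into a single net of representations of $A$ factoring through the fibers $A_{x_\alpha}$ and Fell-converging to $\rho$. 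The natural device is to index by the product of the poset $I$ with the net of the $x_\alpha$'s together with a basic-neighborhood parameter in the Fell topology, i.e. to use a standard diagonal/iterated-limit argument: by \cite[Theorem 3.4.4]{dix-cast} it suffices to produce, for each finite $F\subseteq A$, each $\varepsilon>0$, and each index $\alpha$, a representation $\tau$ factoring through some $A_{x_{\alpha'}}$ with $\alpha'\ge\alpha$ that is $(F,\varepsilon)$-close to $\rho$; and any such $F$ is, up to $\varepsilon$-perturbation, contained in $\iota_i(A_i)$ for $i$ large, so closeness can be tested inside a single $A_i$, where continuity of $A_i$ supplies the required representation, which we then push forward to $A$ via $\iota_{x_{\alpha'}}:(A_i)_{x_{\alpha'}}\to A_{x_{\alpha'}}\to B(\cdot)$.

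The main obstacle I anticipate is bookkeeping the interaction of three simultaneous limiting processes — the colimit over $i\in I$, the net $x_\alpha\to x$, and the Fell convergence itself — without circularity, since the ``large enough $i$'' needed to approximate a given finite subset of $A$ depends on that subset while the representations built at stage $i$ live on Hilbert spaces depending on $i$. I would handle this by working throughout with the approximate-containment reformulation of the Fell topology (weak containment up to $\varepsilon$ on finite sets of vectors and elements, as in \cite[Lemma 2.4]{fell-wk1} and \cite{fell-wk1,fell-smth}) rather than with honest convergent nets, which makes the diagonal argument a routine cofinality check: the directed set of triples $(F,\varepsilon,\alpha)$ suffices. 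A complementary, perhaps cleaner route worth mentioning is purely fiberwise: to prove lower semicontinuity of $x\mapsto\|a_x\|$ directly, approximate $a$ by $\iota_i(a_i)$ within $\varepsilon$, use $\|a_x\|\le\|(a_i)_x\|+\varepsilon$ and $\|(a_i)_{x_\alpha}\|\le\|a_{x_\alpha}\|+\varepsilon$ (both from $\iota_i$ being a contractive $\co{X}$-morphism, together with the colimit description of fibers showing $\|(a_i)_x\|=\lim_j\|\iota_{ji}(a_i)_x\|$ and hence $\|\iota_i(a_i)_x\|=\|(a_i)_x\|$ once one checks the maps $(A_i)_x\to A_x$ are isometric on the colimit, which they are since $A_x$ is itself a filtered colimit and the transition maps there are the fiberwise images of the $\iota_{ji}$), and then invoke continuity of $A_i$ to get $\|(a_i)_x\|\le\limsup_\alpha\|(a_i)_{x_\alpha}\|$; combining yields $\|a_x\|\le\limsup_\alpha\|a_{x_\alpha}\|+2\varepsilon$ for all $\varepsilon>0$. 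This second argument avoids representation theory entirely and I would likely present it as the primary proof, keeping \Cref{le:fell} in reserve. The one delicate point in it — that $(A_i)_x\to A_x$ is isometric — follows because the ideal defining the fiber of a filtered colimit is the closure of the union of (images of) the ideals defining the fibers of the $A_i$, so the colimit of the quotients is the quotient, and filtered colimits of $C^*$-algebras preserve injectivity of transition maps up to isometry.
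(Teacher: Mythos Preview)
Your second, ``purely fiberwise'' route is precisely the paper's proof: approximate $a$ by some $\iota_i(a_i)$, invoke continuity of $A_i$ near $x$, and transfer the inequality back to $a$. So as far as overall strategy goes you are aligned with the paper, and you have in fact been more explicit than the paper about where the work lies.

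Two substantive issues, though. First, your Fell-topology route breaks at the step where you ``push forward to $A$ via $(A_i)_{x_{\alpha'}}\to A_{x_{\alpha'}}\to B(\cdot)$''. The arrow points the wrong way for this: what you have in hand is a representation of $(A_i)_{x_{\alpha'}}$, and the map $(A_i)_{x_{\alpha'}}\to A_{x_{\alpha'}}$ does not let you promote it to a representation of $A_{x_{\alpha'}}$ (hence of $A$). That fiber map need not be injective, so there is no extension mechanism available. If instead you simply take, for each $\alpha$, a representation of $A$ faithful on $A_{x_\alpha}$ and try to show Fell convergence to $\rho$, you are thrown straight back onto the lower-semicontinuity statement you are trying to prove; the Fell detour does not shortcut it.

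Second, in the direct argument the inequality $\|(a_i)_{x_\alpha}\|\le\|a_{x_\alpha}\|+\varepsilon$ is the one that needs justification, and your parenthetical is circular. Contractivity of $(A_i)_{x_\alpha}\to A_{x_\alpha}$ only gives $\|\iota_i(a_i)_{x_\alpha}\|_{A_{x_\alpha}}\le\|(a_i)_{x_\alpha}\|_{(A_i)_{x_\alpha}}$, the reverse of what you want. Your claimed identity $\|(a_i)_x\|=\lim_j\|\iota_{ji}(a_i)_x\|$ is not correct as stated: the right-hand side (norms taken in $(A_j)_x$) is, by the very colimit description $A_x=\varinjlim_j(A_j)_x$ you invoke, equal to $\|\iota_i(a_i)_x\|_{A_x}$; asserting this equals $\|(a_i)_x\|_{(A_i)_x}$ is exactly the isometry of $(A_i)_x\to A_x$ you set out to prove. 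The paper's own proof is just as terse on this point (its second bullet concerns the \emph{global} norms $\|a_i\|_{A_i}$ versus $\|\iota_i(a_i)\|_A$, not the fiber norms), so you are not overlooking something obvious; but you should be aware that the step you flagged as delicate is genuinely so, and your sketch does not yet dispatch it.
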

\begin{proof}
  As noted before (\Cref{re:low}) we need lower semicontinuity, since upper semicontinuity is automatic. Concretely, having fixed an element $a\in A$, a point $x\in X$ and $\varepsilon>0$, we have to argue that for $y$ ranging over some neighborhood of $x$ we have
  \begin{equation}\label{eq:yxe}
    \|a_y\|>\|a_x\|-\varepsilon. 
  \end{equation}
  Denote by $\iota_i:A_i\to A$ the structure map of the colimit. The conclusion follows by
  \begin{itemize}
  \item approximating $a$ (and hence $a_x$ and $a_y$) arbitrarily well by elements $\iota_i(a_i)$ for $a_i\in A_i$
  \item for which $\|a_i\|$ (norm in $A_i$) is arbitrarily close to $\|\iota_i(a_i)\|$ (norm in $A$)
  \item and such that \Cref{eq:yxe} holds for $y$ in some neighborhood of $x$ with $a_i$ in place of $a$ (possible by the continuity of $A_i$).
  \end{itemize}
\end{proof}

%%%%%%%%%%%%%%%%%%%%%%%%%%%%%%%%%%%%%%%%%%%%%%%%%%%%%%%%%%%%%%%%%%%%%%%%%%%%%
%%%%%%%%%%%%%%%%%%%%%%%%%%%%%%%%%%%%%%%%%%%%%%%%%%%%%%%%%%%%%%%%%%%%%%%%%%%%%
\section{A comment on the literature}\label{subse:lit}

The present side-note is devoted to an issue I believe is present in the proof of \cite[Theorem 3.7]{bl-fr}. Though apparently the problem is fixable, the proof as-is posed some difficulties (for this reader, at least). The setup is as follows: one considers an arbitrary element $a$ in the dense purely algebraic pushout
\begin{equation*}
  A_1*^{\mathrm{alg}}_{C(X)} A_2\subset A_1*_{C(X)} A_2
\end{equation*}
and seeks to show that
\begin{equation*}
  X\ni x\mapsto \|a_x\|
\end{equation*}
is lower semicontinuous. This is done for separable $C^*$-algebras first, in \cite[Lemma 3.5]{bl-fr}, and then generalized to the present setting by
\begin{itemize}
\item first embedding $a$ into a pushout $D_1*_{C(Y)}D_2$ where $C(Y)\subseteq C(X)$ and $D_i\subseteq A_i$ are separable $C^*$-subalgebras;
\item citing \cite[Theorem 4.2]{ped-psh} to conclude that there is an embedding
  \begin{equation}\label{eq:difam}
    D_1*_{C(Y)}D_2\subseteq A_1*_{C(X)}A_2.
  \end{equation}
\end{itemize}

The problem is with this last step: \cite[Theorem 4.2]{ped-psh} proves inclusions of the form
\begin{equation*}
  D_1*_{C}D_2\subseteq A_1*_{C}A_2
\end{equation*}
given inclusions
\begin{equation*}
  C\subseteq D_i\subseteq A_i,\ i=1,2. 
\end{equation*}
Note that the amalgam $C$ is the same on both sides. On the other hand, given that on the right-hand side of \Cref{eq:difam} one amalgamates over a (generally-speaking) {\it larger} algebra $C(X)\supset C(Y)$, it is not at all obvious that the natural map \Cref{eq:difam} is indeed an inclusion. Indeed, it certainly will not be in full generality:

\begin{example}\label{ex:notinj}
  Consider the case where $D_i=A_i$, but $C(Y)$ is trivial (i.e. $\bC$) whereas $C(X)$ is not. \Cref{eq:difam} is then a {\it sur}jection but not an injection.
\end{example}

Note also that one cannot exhaust $A_i$ and $C(X)$ respectively by separable $D_i$ and $C(Y)$ and naively hope for a continuity permanence property under filtered colimits
\begin{equation*}
  A_1*_{C(X)}A_2 = \varinjlim_{D_i,C(Y)} D_1*_{C(Y)}D_2:
\end{equation*}

\begin{example}\label{ex:notcolim}
  Consider the space $X=\bZ_p$ (the $p$-adic integers), expressed as a limit
  \begin{equation*}
    X=\varprojlim_n (X_n:=\bZ/p^n).
  \end{equation*}
  This affords us a filtered-colimit description
  \begin{equation*}
    C(X) = \varinjlim_n C(X_n),
  \end{equation*}
  but if $A\supset C(X)$ denotes the algebra of {\it all} (possibly discontinuous) bounded functions on $X$, then
  \begin{itemize}
  \item all $C(X_n)\subset A$ are continuous simply because $X_n$ are discrete, while
  \item $C(X)\subset A$ isn't,
  \item despite the fact that the inclusion $C(X)\subset A$ is the filtered colimit of the inclusions 
    \begin{equation*}
      C(X_n)\subset A.
    \end{equation*}
  \end{itemize}
\end{example}

\begin{remark}
  Contrast \Cref{ex:notcolim} with \Cref{th:filt}, where the base space for the fields $A_i$ whose colimit is being considered is fixed. Such problems arise precisely when the base space changes, much as in the initial observation that \Cref{eq:difam} need not be an embedding.
\end{remark}

For all of these reasons, it would seem worthwhile to have a proof of field-continuity permanence under pushouts that is independent of the separable case. We sketch such a proof here.

\begin{theorem}\label{th:psh-cont}
  Let $X$ be a compact Hausdorff space and $A_i$, $i\in I$ a family of unital $C(X)$-algebras. If all $A_i$ are continuous then so is the pushout
  \begin{equation*}
    A:=\Asterisk_{C(X)} A_i.
  \end{equation*}
\end{theorem}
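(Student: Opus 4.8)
The plan is to use the Fell-topology characterization of continuity from \Cref{le:fell}. Fix a convergent net $x_\alpha \to x$ in $X$ and a representation $\rho$ of $A = \Asterisk_{C(X)} A_i$ that factors through $A \to A_x$; we must exhibit $\rho$ as a Fell limit of representations factoring through the fibers $A \to A_{x_\alpha}$. The key structural observation is that a representation of the pushout $A$ on a Hilbert space $H$ is precisely a compatible family of representations $\rho^{(i)}$ of the $A_i$ on the \emph{same} $H$ that restrict to one and the same representation of $C(X)$; and that this common $C(X)$-representation determines, and is determined by, the fiber through which the family factors. Thus $\rho$ factoring through $A_x$ means each $\rho^{(i)}$ factors through $(A_i)_x$ and all restrict to the evaluation character $p_x$ on $C(X)$.

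Now I would apply \Cref{le:fell} to each $A_i$ \emph{separately}: since each $A_i$ is continuous, $\rho^{(i)}$ (which factors through $(A_i)_x$) is a Fell limit of representations $\rho^{(i)}_\beta$ of $A_i$ factoring through $(A_i)_{x_{\beta(i)}}$ for nets $x_{\beta(i)} \to x$. The subtlety is that these per-index nets must be \emph{coordinated}: to assemble a representation of the pushout we need, for each index of a single combined net, all the $\rho^{(i)}$ to land over the \emph{same} point $x_\gamma$ of $X$ and over the same $C(X)$-character. Here I would pass through the concrete Fell-convergence picture recorded in the proof of \Cref{le:fell} (via \cite[Lemma 2.4]{fell-wk1}): after replacing each representation by a countable amplification, one realizes $\rho^{(i)}$ nondegenerately on one Hilbert space $H$ carrying degenerate copies of the approximants, with $\|\rho^{(i)}(a)\xi - \rho^{(i)}_\beta(a)\xi\| \to 0$ for all $a \in A_i$, $\xi \in H$. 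The point is that for a fixed point $y = x_\alpha$ near $x$, one can \emph{simultaneously} approximate all the $\rho^{(i)}$ by representations of $A_i$ factoring through the fiber $(A_i)_{x_\alpha}$ at that one common point, because the only constraint coupling the indices is the value on $C(X)$, and all the approximants at the point $x_\alpha$ automatically restrict to $p_{x_\alpha}$ on $C(X)$. These simultaneously approximating families glue to a representation $\rho_\alpha$ of the pushout $A$ factoring through $A_{x_\alpha}$, and the vector-wise estimates for the individual $A_i$ combine (using that the $A_i$ generate $A$) to give vector-wise convergence $\rho_\alpha(a)\xi \to \rho(a)\xi$ for all $a$ in the algebraic pushout, hence for all $a \in A$. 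Applying \Cref{le:fell} in the reverse direction then yields continuity of $A$.

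The main obstacle I anticipate is making the coordination of the nets precise: the Fell-convergence statement in \Cref{le:fell} for each $A_i$ \emph{a priori} produces its own index net $\beta$ and its own sequence of base points $x_{\beta(i)} \to x$, and there is no reason these base points agree across $i$. The resolution is that we do not need to match the approximants obtained abstractly from each $A_i$; rather, we fix the target net $x_\alpha \to x$ \emph{first} and then, for each $\alpha$, independently choose for each $i$ a representation of $A_i$ through the fiber $(A_i)_{x_\alpha}$ that is close (in the Fell/vector sense of \cite[Lemma 2.4]{fell-wk1}) to $\rho^{(i)}$. That this is possible for the single fixed point $x_\alpha$ is exactly the content of continuity of $A_i$ at $x$ along the net $x_\alpha \to x$, combined with the observation that the Fell neighborhoods of $\rho^{(i)}$ are cofinal enough to be witnessed at each late stage of \emph{any} net converging to $x$. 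A second, more bookkeeping-level obstacle is checking that the glued families genuinely define representations of the pushout (the $C(X)$-actions coincide by construction, being $p_{x_\alpha}$ on the nose) and that passing from vector-wise convergence on each generating subalgebra $A_i$ to vector-wise convergence on the algebraic pushout $\Asterisk^{\mathrm{alg}}_{C(X)} A_i$ is legitimate — this is a standard $\varepsilon/2^n$ telescoping argument over words in the generators, uniformly bounded since all representations are contractive, and it suffices to treat the dense algebraic pushout because norms of $a_{x_\alpha}$ are controlled by approximation as in \Cref{th:filt}.
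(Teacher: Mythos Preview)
Your proposal is correct and follows essentially the same route as the paper: use \Cref{le:fell}, restrict $\rho$ to each $A_i$, invoke continuity of $A_i$ together with the concrete vector-wise convergence from \cite[Lemma 2.4]{fell-wk1} to get approximants over the common fiber $x_\alpha$, and then glue these into a representation of the pushout that Fell-converges to $\rho$. The only organizational differences are that the paper first reduces to $|I|=2$ via \Cref{th:filt} whereas you work with general $I$ directly, and you spell out the telescoping step from generators to words in the algebraic pushout that the paper leaves implicit.
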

\begin{proof}
  As in the proof of \Cref{th:main}, once we have the statement for {\it pairs} of algebras $A_1$ and $A_2$ the general conclusion follows via \Cref{th:filt} by taking a filtered colimit. We thus focus, for the duration of the proof, on the case of just two algebras $A_i$, $i=1,2$.  
  
  The proof will mimic that of \cite[Theorem 3.2]{chi-rf}. Fix a convergent net \Cref{eq:net} in $X$ and consider a representation $\rho$ of $A=A_1*_{C(X)}A_2$ that
  \begin{itemize}
  \item factors through $A_x$ for some $x\in X$ fixed throughout the proof, and
  \item induces a faithful representation of $A_x$. 
  \end{itemize}
  Then, by \Cref{le:fell} (and as in its proof), the continuity of $A_i$, $i=1,2$ implies that the restrictions $\rho_i$ of $\rho$ to $A_i$ can be Fell-approximated by representations $\rho_{i,\alpha}$ factoring respectively through $(A_{i})_{x_{\alpha}}$, in the sense that
  \begin{equation}\label{eq:ia}
    \lim_{\alpha}\|\rho_i(a_i)\xi-\rho_{i,\alpha}(a_i)\xi\|=0,\ i=1,2
  \end{equation}
  for $a_i\in A_i$. Since $C(X)$ acts via the character $p_{x_{\alpha}}$ in both both $\rho_{i,\alpha}$, one can form the amalgamated free product of the $\rho_{i,\alpha}$, $i=1,2$; by \Cref{eq:ia} that free product will Fell-converge to $\rho$, hence the conclusion by \Cref{le:fell}.
\end{proof}

%%%%%%%%%%%%%%%%%%%%%%%%%%%%%%%%%%%%%%%%%%%%%%%%%%%%%%%%%%%%%%%%%%%%%%%%%%%%%%%%%%%%%%%%%%%%%%%%%%%%%%%%%%%%%%%%%%
%%%%%%%%%%%%%%%%%%%%%%%%%%%%%%%%%%%%%%%%%%%%%%%%%%%%%%%%%%%%%%%%%%%%%%%%%%%%%%%%%%%%%%%%%%%%%%%%%%%%%%%%%%%%%%%%%%

%\bibliography{bib}{}
%\bibliographystyle{plain}
\def\polhk#1{\setbox0=\hbox{#1}{\ooalign{\hidewidth
  \lower1.5ex\hbox{`}\hidewidth\crcr\unhbox0}}}

\addcontentsline{toc}{section}{References}

\Addresses

\end{document}